\title{Tensor distributions with covariance tensor or correlation tensor}
\author{Yurii Yurchenko\thanks{Odessa Polytechnic State University, Institute of Computer Systems, Department of Applied Mathematics and Information Technology, Shevchenko av. 1, Odessa 65044, Ukraine}}
\date{August, 2021}
\begin{document}
\maketitle
\begin{abstract}
In this article, we define the matricization of a tensor and we present some properties of the matricization. After that, we define the determinant of a tensor and we present some properties of the determinant. We define the covariance tensor and we present some properties of the covariance tensor. In a similar way, we define the correlation tensor. We define the tensor normal distribution. In a similar way, we define the tensor elliptical distributions. We prove the equivalence of the tensor elliptical distribution representations.
\end{abstract}
\textbf{2020 Mathematics Subject Classification}: 60E05, 62H05, 62H20, 60B20, 15B52, 15A69, 15A15.\\
\textbf{Keywords}: covariance matrix, correlation matrix, tensor, random matrix, random tensor, distribution.
\newtheorem{prop}{Proposition}[section]
\newtheorem{cor}{Corollary}[section]
\newtheorem{defi}{Definition}[section]
\newtheorem{property}{Property}[section]
\newtheorem{lemma}{Lemma}[section]
\newtheorem{theorem}{Theorem}[section]
\section{Introduction}
Let $\boldsymbol{x}$ is a k-dimensional random vector and $\boldsymbol{x}\sim\mathcal{N}_{n}\left(\boldsymbol{\mu},\mathbf{\Sigma}\right)$, then, by definition \cite{kotz}, the probability density function of the random vector is given by
\[
f(\boldsymbol{x})=\frac{\operatorname{exp}\left(-\frac{1}{2}\left(\boldsymbol{x}-\boldsymbol{\mu}\right)^{\mathrm{T}}\mathbf{\Sigma}^{-1}\left(\boldsymbol{x}-\boldsymbol{\mu}\right)\right)}{(2\pi)^{n/2}\det\left(\mathbf{\Sigma}\right)^{1/2}}.
\]
Let's rewrite this formula in a different form:
\[
f(\boldsymbol{x})=\frac{\operatorname{exp}\left(-\frac{1}{2}\left(\boldsymbol{x}-\boldsymbol{\mu}\right):\left(\mathbf{\Sigma}^{-1}\right):\left(\boldsymbol{x}-\boldsymbol{\mu}\right)\right)}{(2\pi)^{n/2}\det\left(\mathbf{\Sigma}\right)^{1/2}},
\]
where $:$ is the double dot product of tensors, i.e.
\[
\left(\boldsymbol{x}-\boldsymbol{\mu}\right):\left(\mathbf{\Sigma}^{-1}\right):\left(\boldsymbol{x}-\boldsymbol{\mu}\right)=\left(\boldsymbol{x}-\boldsymbol{\mu}\right)_{i}\left(\mathbf{\Sigma}^{-1}\right)^{i}_{j}\left(\boldsymbol{x}-\boldsymbol{\mu}\right)^{j}=\sum_{i,j}\left(\boldsymbol{x}-\boldsymbol{\mu}\right)_{i}\left(\mathbf{\Sigma}^{-1}\right)_{i,j}\left(\boldsymbol{x}-\boldsymbol{\mu}\right)_{j},
\]
where we use the Einstein summation convention \cite{its}.

If $\boldsymbol{x}\sim\mathcal{N}_{n}\left(\boldsymbol{\mu},\mathbf{\Sigma}\right)$, then, by property \cite{kotz}, the covariance matrix $\mathbf{K}_{\boldsymbol{xx}}$ of the random vector is equal to $\mathbf{\Sigma}$.

By definition \cite{kotz}, the covariance matrix of a random vector is given by
\[
\mathbf{K}_{\boldsymbol{xx}}=\mathbb{E}\left[\left(\boldsymbol{x}-\mathbb{E}\left[\boldsymbol{x}\right]\right)\left(\boldsymbol{x}-\mathbb{E}\left[\boldsymbol{x}\right]\right)^{\mathrm{T}}\right].
\]
In other words,
\[
\left(\mathbf{K}_{\boldsymbol{xx}}\right)_{i,j}=\mathbb{E}\left[\left(\boldsymbol{x}_i-\mathbb{E}\left[\boldsymbol{x}_i\right]\right)\left(\boldsymbol{x}_j-\mathbb{E}\left[\boldsymbol{x}_j\right]\right)^{\mathrm{T}}\right]=\operatorname{cov}\left(\boldsymbol{x}_i,\boldsymbol{x}_j\right).
\]

The covariance matrix is a generalization of the variance for random vectors. Also, the covariance matrix $\mathbf{K}_{\boldsymbol{xx}}$ is the order-2 tensor, with dimensional lengths $n \times n$, if $\boldsymbol{x}$ is a n-dimensional random vector, i.e. a random order-1 tensor. 
\section{Tensor matricization, inverse tensor and tensor determinant}
Let $\operatorname{mat}\left(\boldsymbol{\mathcal{X}}\right)$ denote the matricization of tensor $\boldsymbol{\mathcal{X}}$.
\begin{defi}
Let $\boldsymbol{\mathcal{X}}=\left(\boldsymbol{\mathcal{X}}_{i_1,\ldots,i_D,i_{D+1},\ldots,i_{2D}}\right)$ is an order-2D tensor, with dimensional lengths $n_1 \times n_2 \times\ldots\times n_D\times n_1 \times n_2 \times\ldots\times n_D=\mathbf{n}\times \mathbf{n}$, then the matricization of the tensor is given by
\[
\operatorname{mat}\left(\boldsymbol{\mathcal{X}}\right)=\begin{pmatrix}
\boldsymbol{\mathcal{X}}_{1,\ldots,1,1\ldots,1}&\boldsymbol{\mathcal{X}}_{1,\ldots,1,2\ldots,1}&\cdots&\boldsymbol{\mathcal{X}}_{1,\ldots,1,1,2\ldots,1}&\cdots&\boldsymbol{\mathcal{X}}_{1,\ldots,1,n_{1},\ldots,n_{D}}\\
\boldsymbol{\mathcal{X}}_{2,\ldots,1,1\ldots,1}&\boldsymbol{\mathcal{X}}_{2,\ldots,1,2\ldots,1}&\cdots&\boldsymbol{\mathcal{X}}_{2,\ldots,1,1,2\ldots,1}&\cdots&\boldsymbol{\mathcal{X}}_{2,\ldots,1,n_{1},\ldots,n_{D}}\\
\vdots&\vdots&\ddots&\vdots&\ddots&\vdots\\
\boldsymbol{\mathcal{X}}_{1,2,\ldots,1,1,\ldots,1}&\boldsymbol{\mathcal{X}}_{1,2,\ldots,1,2\ldots,1}&\cdots&\boldsymbol{\mathcal{X}}_{1,2,\ldots,1,1,2,\ldots,1}&\cdots&\boldsymbol{\mathcal{X}}_{1,2,\ldots,1,n_{1},\ldots,n_{D}}\\
\vdots&\vdots&\ddots&\vdots&\ddots&\vdots\\
\boldsymbol{\mathcal{X}}_{n_{1},\ldots,n_{D},1,\ldots,1}&\boldsymbol{\mathcal{X}}_{n_{1},\ldots,n_{D},1,2,\ldots,1}&\cdots&\boldsymbol{\mathcal{X}}_{n_{1},\ldots,n_{D},1,2,\ldots,1}&\cdots&\boldsymbol{\mathcal{X}}_{n_{1},\ldots,n_{D},n_{1},\ldots,n_{D}}\\
\end{pmatrix}.
\]
\end{defi}
The matricization of a tensor has the following properties. The proof of the properties is trivial, so we omit it.
\begin{property}
\label{zeromat}
$\operatorname{mat}\left(\boldsymbol{\mathcal{O}}\right)=\mathbf{O}$, where $\boldsymbol{\mathcal{O}}$ is the zero tensor, i.e  $\boldsymbol{\mathcal{O}}_{i_1,\ldots,i_D,i_{D+1},\ldots,i_{2D}}=0$, and $\mathbf{O}$ is the zero matrix.
\end{property}
\begin{property}
\label{identitymat}
$\operatorname{mat}\left(\boldsymbol{\mathcal{I}}\right)=\mathbf{I}$, where $\mathbf{I}$ is the identity matrix and $\boldsymbol{\mathcal{I}}$ is the identity tensor, i.e  
\[
\boldsymbol{\mathcal{I}}_{i_1,\ldots,i_D,i_{D+1},\ldots,i_{2D}}=\begin{cases} 1, & i_1=\ldots=i_D=i_{D+1}=\ldots=i_{2D}\\0, & \text{otherwise}\end{cases}.
\]
\end{property}
\begin{property}
\label{scalmat}
$\operatorname{mat}\left(\lambda\boldsymbol{\mathcal{X}}\right)=\lambda\operatorname{mat}\left(\boldsymbol{\mathcal{X}}\right)$, where $\lambda$ is a scalar.
\end{property}
\begin{property}
$\operatorname{mat}\left(\boldsymbol{\mathcal{X}}+\boldsymbol{\mathcal{Y}}\right)=\operatorname{mat}\left(\boldsymbol{\mathcal{X}}\right)+\operatorname{mat}\left(\boldsymbol{\mathcal{Y}}\right)$.
\end{property}
\begin{property}
\label{tran}
Let $\boldsymbol{\mathcal{X}}=\left(\boldsymbol{\mathcal{X}}_{i_1,\ldots,i_D,i_{D+1},\ldots,i_{2D}}\right)$ is an order-2D tensor, with dimensional lengths $n_1 \times n_2 \times\ldots\times n_D\times n_1 \times n_2 \times\ldots\times n_D=\mathbf{n}\times \mathbf{n}$ and $\boldsymbol{\mathcal{X}}^{\mathrm{T}}=\left(\boldsymbol{\mathcal{X}}_{i_{D+1},\ldots,i_{2D},i_1,\ldots,i_D}\right)$, then
\[
\operatorname{mat}\left(\boldsymbol{\mathcal{X}}^{\mathrm{T}}\right)=\operatorname{mat}\left(\boldsymbol{\mathcal{X}}\right)^{\mathrm{T}}.
\]
\end{property}
\begin{property}
\label{prodmat}
Let $\boldsymbol{\mathcal{X}}=\left(\boldsymbol{\mathcal{X}}_{i_1,\ldots,i_D,j_1,\ldots,j_D}\right)$ is an order-2D tensor, with dimensional lengths $n_1 \times n_2 \times\ldots\times n_D\times n_1 \times n_2 \times\ldots\times n_D=\mathbf{n}\times \mathbf{n}$, and let $\boldsymbol{\mathcal{Y}}=\left(\boldsymbol{\mathcal{Y}}_{j_1,\ldots,j_D,k_1,\ldots,k_D}\right)$ is an order-2D tensor, with dimensional lengths $n_1 \times n_2 \times\ldots\times n_D\times n_1 \times n_2 \times\ldots\times n_D=\mathbf{n}\times\mathbf{n}$, then 
\[
\operatorname{mat}\left(\boldsymbol{\mathcal{X}}^{i_1,\ldots,i_D}_{j_1,\ldots,j_D}\boldsymbol{\mathcal{Y}}^{j_1,\ldots,j_D}_{k_1,\ldots,k_D}\right)=\operatorname{mat}\left(\boldsymbol{\mathcal{X}}\right)\operatorname{mat}\left(\boldsymbol{\mathcal{Y}}\right).
\]
\end{property}
\begin{defi}
\label{defimat-1}
Let $\boldsymbol{\mathcal{X}}=\left(\boldsymbol{\mathcal{X}}_{i_1,\ldots,i_D,j_1,\ldots,j_D}\right)$ is an order-2D tensor, with dimensional lengths $n_1 \times n_2 \times\ldots\times n_D\times n_1 \times n_2 \times\ldots\times n_D=\mathbf{n}\times \mathbf{n}$, then the inverse tensor of the tensor $\boldsymbol{\mathcal{X}}$ is given by
\[
\boldsymbol{\mathcal{X}}^{i_1,\ldots,i_D}_{j_1,\ldots,j_D}\left(\boldsymbol{\mathcal{X}}^{-1}\right)_{k_1,\ldots,k_D}^{j_1,\ldots,j_D}=\left(\boldsymbol{\mathcal{X}}^{-1}\right)^{i_1,\ldots,i_D}_{j_1,\ldots,j_D}\boldsymbol{\mathcal{X}}_{k_1,\ldots,k_D}^{j_1,\ldots,j_D}=\boldsymbol{\mathcal{I}},
\]
where $\boldsymbol{\mathcal{I}}$ is the identity tensor.
\end{defi}
\begin{property}
\label{mat-1}
Let $\boldsymbol{\mathcal{X}}=\left(\boldsymbol{\mathcal{X}}_{i_1,\ldots,i_D,j_1,\ldots,j_D}\right)$ is an order-2D tensor, with dimensional lengths $n_1 \times n_2 \times\ldots\times n_D\times n_1 \times n_2 \times\ldots\times n_D=\mathbf{n}\times \mathbf{n}$, then 
\[
\operatorname{mat}\left(\boldsymbol{\mathcal{X}}^{-1}\right)=\operatorname{mat}\left(\boldsymbol{\mathcal{X}}\right)^{-1}.
\]
\end{property}
\begin{proof}
By Definition \ref{defimat-1},
\[
\boldsymbol{\mathcal{X}}^{i_1,\ldots,i_D}_{j_1,\ldots,j_D}\left(\boldsymbol{\mathcal{X}}^{-1}\right)_{k_1,\ldots,k_D}^{j_1,\ldots,j_D}=\left(\boldsymbol{\mathcal{X}}^{-1}\right)^{i_1,\ldots,i_D}_{j_1,\ldots,j_D}\boldsymbol{\mathcal{X}}_{k_1,\ldots,k_D}^{j_1,\ldots,j_D}=\boldsymbol{\mathcal{I}},
\]
By Property \ref{prodmat},
\[
\operatorname{mat}\left(\boldsymbol{\mathcal{X}}\right)\operatorname{mat}\left(\boldsymbol{\mathcal{X}}^{-1}\right)=\operatorname{mat}\left(\boldsymbol{\mathcal{X}}^{-1}\right)\operatorname{mat}\left(\boldsymbol{\mathcal{X}}\right)=\operatorname{mat}\left(\boldsymbol{\mathcal{I}}\right).
\]
By Property \ref{identitymat},
\[
\operatorname{mat}\left(\boldsymbol{\mathcal{X}}\right)\operatorname{mat}\left(\boldsymbol{\mathcal{X}}^{-1}\right)=\operatorname{mat}\left(\boldsymbol{\mathcal{X}}^{-1}\right)\operatorname{mat}\left(\boldsymbol{\mathcal{X}}\right)=\mathbf{I}.
\]
Thus,
\[
\operatorname{mat}\left(\boldsymbol{\mathcal{X}}^{-1}\right)=\operatorname{mat}\left(\boldsymbol{\mathcal{X}}\right)^{-1}.
\]
\end{proof}
\begin{defi}
\label{det}
Let $\boldsymbol{\mathcal{X}}=\left(\boldsymbol{\mathcal{X}}_{i_1,\ldots,i_D,i_{D+1},\ldots,i_{2D}}\right)$ is an order-2D tensor, with dimensional lengths $n_1 \times n_2 \times\ldots\times n_D\times n_1 \times n_2 \times\ldots\times n_D=\mathbf{n}\times \mathbf{n}$, then the determinant of the tensor is given by
\[
\det\left(\boldsymbol{\mathcal{X}}\right)=\det\left(\operatorname{mat}\left(\boldsymbol{\mathcal{X}}\right)\right).
\]
\end{defi}
The determinant of a tensor has the following properties.
\begin{property}
$\det\left(\boldsymbol{\mathcal{O}}\right)=0$, where $\boldsymbol{\mathcal{O}}$ is the zero tensor, i.e  $\boldsymbol{\mathcal{O}}_{i_1,\ldots,i_D,i_{D+1},\ldots,i_{2D}}=0.$
\end{property}
\begin{proof}
By Definition \ref{det},
\[
\det\left(\boldsymbol{\mathcal{O}}\right)=\det\left(\operatorname{mat}\left(\boldsymbol{\mathcal{O}}\right)\right).
\]
By Property \ref{zeromat},
\[
\det\left(\boldsymbol{\mathcal{O}}\right)=\det\left(\mathbf{O}\right).
\]
Thus, $\det\left(\boldsymbol{\mathcal{O}}\right)=0.$
\end{proof}
\begin{property}
$\det\left(\boldsymbol{\mathcal{I}}\right)=1$, where $\boldsymbol{\mathcal{I}}$ is the identity tensor, i.e  
\[
\boldsymbol{\mathcal{I}}_{i_1,\ldots,i_D,i_{D+1},\ldots,i_{2D}}=\begin{cases} 1, & i_1=\ldots=i_D=i_{D+1}=\ldots=i_{2D}\\0, & \text{otherwise}\end{cases}.
\]
\end{property}
\begin{proof}
By Definition \ref{det},
\[
\det\left(\boldsymbol{\mathcal{I}}\right)=\det\left(\operatorname{mat}\left(\boldsymbol{\mathcal{I}}\right)\right).
\]
By Property \ref{identitymat},
\[
\det\left(\boldsymbol{\mathcal{I}}\right)=\det\left(\mathbf{I}\right).
\]
Thus, $\det\left(\boldsymbol{\mathcal{I}}\right)=1.$
\end{proof}
\begin{property}
Let $\boldsymbol{\mathcal{X}}=\left(\boldsymbol{\mathcal{X}}_{i_1,\ldots,i_D,i_{D+1},\ldots,i_{2D}}\right)$ is an order-2D tensor, with dimensional lengths $n_1 \times n_2 \times\ldots\times n_D\times n_1 \times n_2 \times\ldots\times n_D=\mathbf{n}\times \mathbf{n}$, then the determinant of the tensor is given by
\[
\det\left(\lambda\boldsymbol{\mathcal{X}}\right)=\lambda^{n^{*}}\det\left(\boldsymbol{\mathcal{X}}\right),
\]
where $\lambda$ is a scalar and $n^{*}=\prod_{i=1}^{D}n_i$.
\end{property}
\begin{proof}
By Definition \ref{det},
\[
\det\left(\lambda\boldsymbol{\mathcal{X}}\right)=\det\left(\operatorname{mat}\left(\lambda\boldsymbol{\mathcal{X}}\right)\right).
\]
By Property \ref{scalmat},
\[
\det\left(\lambda\boldsymbol{\mathcal{X}}\right)=\det\left(\lambda\operatorname{mat}\left(\boldsymbol{\mathcal{X}}\right)\right),
\]
where $\operatorname{mat}\left(\boldsymbol{\mathcal{X}}\right)$ is a square $n^{*}\times n^{*}$ matrix.

Therefore,
\[
\det\left(\lambda\boldsymbol{\mathcal{X}}\right)=\lambda^{n^{*}}\det\left(\operatorname{mat}\left(\boldsymbol{\mathcal{X}}\right)\right).
\]
Thus,
\[
\det\left(\lambda\boldsymbol{\mathcal{X}}\right)=\lambda^{n^{*}}\det\left(\boldsymbol{\mathcal{X}}\right).
\]
\end{proof}
\begin{property}
Let $\boldsymbol{\mathcal{X}}=\left(\boldsymbol{\mathcal{X}}_{i_1,\ldots,i_D,i_{D+1},\ldots,i_{2D}}\right)$ is an order-2D tensor, with dimensional lengths $n_1 \times n_2 \times\ldots\times n_D\times n_1 \times n_2 \times\ldots\times n_D=\mathbf{n}\times \mathbf{n}$ and $\boldsymbol{\mathcal{X}}^{\mathrm{T}}=\left(\boldsymbol{\mathcal{X}}_{i_{D+1},\ldots,i_{2D},i_1,\ldots,i_D}\right)$, then
\[
\det\left(\boldsymbol{\mathcal{X}}^{\mathrm{T}}\right)=\det\left(\boldsymbol{\mathcal{X}}\right).
\]
\end{property}
\begin{proof}
By Definition \ref{det},
\[
\det\left(\boldsymbol{\mathcal{X}}^{\mathrm{T}}\right)=\det\left(\operatorname{mat}\left(\boldsymbol{\mathcal{X}}^{\mathrm{T}}\right)\right).
\]
By Property \ref{tran},
\[
\det\left(\boldsymbol{\mathcal{X}}^{\mathrm{T}}\right)=\det\left(\operatorname{mat}\left(\boldsymbol{\mathcal{X}}\right)^{\mathrm{T}}\right).
\]
Thus,
\[
\det\left(\boldsymbol{\mathcal{X}}^{\mathrm{T}}\right)=\det\left(\operatorname{mat}\left(\boldsymbol{\mathcal{X}}\right)\right)=\det\left(\boldsymbol{\mathcal{X}}\right).
\]
\end{proof}
\begin{property}
Let $\boldsymbol{\mathcal{X}}=\left(\boldsymbol{\mathcal{X}}_{i_1,\ldots,i_D,j_1,\ldots,j_D}\right)$ is an order-2D tensor, with dimensional lengths $n_1 \times n_2 \times\ldots\times n_D\times n_1 \times n_2 \times\ldots\times n_D=\mathbf{n}\times \mathbf{n}$, and let $\boldsymbol{\mathcal{Y}}=\left(\boldsymbol{\mathcal{Y}}_{j_1,\ldots,j_D,k_1,\ldots,k_D}\right)$ is an order-2D tensor, with dimensional lengths $n_1 \times n_2 \times\ldots\times n_D\times n_1 \times n_2 \times\ldots\times n_D=\mathbf{n}\times\mathbf{n}$, then 
\[
\det\left(\boldsymbol{\mathcal{X}}^{i_1,\ldots,i_D}_{j_1,\ldots,j_D}\boldsymbol{\mathcal{Y}}^{j_1,\ldots,j_D}_{k_1,\ldots,k_D}\right)=\det\left(\boldsymbol{\mathcal{X}}\right)\det\left(\boldsymbol{\mathcal{Y}}\right).
\]
\end{property}
\begin{proof}
By Property \ref{prodmat},
\[
\operatorname{mat}\left(\boldsymbol{\mathcal{X}}^{i_1,\ldots,i_D}_{j_1,\ldots,j_D}\boldsymbol{\mathcal{Y}}^{j_1,\ldots,j_D}_{k_1,\ldots,k_D}\right)=\operatorname{mat}\left(\boldsymbol{\mathcal{X}}\right)\operatorname{mat}\left(\boldsymbol{\mathcal{Y}}\right).
\]
Therefore,
\[
\det\left(\operatorname{mat}\left(\boldsymbol{\mathcal{X}}^{i_1,\ldots,i_D}_{j_1,\ldots,j_D}\boldsymbol{\mathcal{Y}}^{j_1,\ldots,j_D}_{k_1,\ldots,k_D}\right)\right)=\det\left(\operatorname{mat}\left(\boldsymbol{\mathcal{X}}\right)\right)\det\left(\operatorname{mat}\left(\boldsymbol{\mathcal{Y}}\right)\right).
\]
By Definition \ref{det},
\[
\det\left(\boldsymbol{\mathcal{X}}^{i_1,\ldots,i_D}_{j_1,\ldots,j_D}\boldsymbol{\mathcal{Y}}^{j_1,\ldots,j_D}_{k_1,\ldots,k_D}\right)=\det\left(\boldsymbol{\mathcal{X}}\right)\det\left(\boldsymbol{\mathcal{Y}}\right).
\]
\end{proof}
\begin{property}
Let $\boldsymbol{\mathcal{X}}=\left(\boldsymbol{\mathcal{X}}_{i_1,\ldots,i_D,j_1,\ldots,j_D}\right)$ is an order-2D tensor, with dimensional lengths $n_1 \times n_2 \times\ldots\times n_D\times n_1 \times n_2 \times\ldots\times n_D=\mathbf{n}\times \mathbf{n}$, then 
\[
\det\left(\boldsymbol{\mathcal{X}}^{-1}\right)=\det\left(\boldsymbol{\mathcal{X}}\right)^{-1}.
\]
\end{property}
\begin{proof}
By Property \ref{mat-1},
\[
\operatorname{mat}\left(\boldsymbol{\mathcal{X}}^{-1}\right)=\operatorname{mat}\left(\boldsymbol{\mathcal{X}}\right)^{-1}.
\]
Therefore,
\[
\det\left(\operatorname{mat}\left(\boldsymbol{\mathcal{X}}^{-1}\right)\right)=\det\left(\operatorname{mat}\left(\boldsymbol{\mathcal{X}}\right)\right)^{-1}.
\]
By Definition \ref{det},
\[
\det\left(\boldsymbol{\mathcal{X}}^{-1}\right)=\det\left(\boldsymbol{\mathcal{X}}\right)^{-1}.
\]
\end{proof}
\section{Covariance tensor}
The cross-covariance tensor of random tensors $\boldsymbol{\mathcal{X}}$ and $\boldsymbol{\mathcal{Y}}$ will be denoted by $\boldsymbol{\mathcal{K}}_{\boldsymbol{\mathcal{X}}\boldsymbol{\mathcal{Y}}}$.
\begin{defi}
Let $\boldsymbol{\mathcal{X}}$ is a random order-D tensor with dimensional lengths $n_1 \times n_2 \times\ldots\times n_D=\mathbf{n}$ and $\boldsymbol{\mathcal{Y}}$ is a random order-D tensor with dimensional lengths $m_1 \times m_2 \times\ldots\times m_D=\mathbf{m}$, then the cross-covariance tensor is given by
\[
\boldsymbol{\mathcal{K}}_{\boldsymbol{\mathcal{X}}\boldsymbol{\mathcal{Y}}}=\mathbb{E}\left[\left(\boldsymbol{\mathcal{X}}-\mathbb{E}\left[\boldsymbol{\mathcal{X}}\right]\right)\otimes_{outer}\left(\boldsymbol{\mathcal{Y}}-\mathbb{E}\left[\boldsymbol{\mathcal{Y}}\right]\right)\right],
\]
where $\otimes_{outer}$ is the outer product of tensors. In other words,
\[
\left(\boldsymbol{\mathcal{K}}_{\boldsymbol{\mathcal{X}}\boldsymbol{\mathcal{Y}}}\right)_{i_1,\ldots,i_D,i_{D+1},\ldots,i_{2D}}=\operatorname{cov}\left(\boldsymbol{\mathcal{X}}_{i_1,\ldots,i_D},\boldsymbol{\mathcal{Y}}_{i_{D+1},\ldots,i_{2D}}\right).
\]
\end{defi}
 Thus, if $\boldsymbol{\mathcal{X}}$ is a random order-D tensor with dimensional lengths $n_1 \times n_2 \times\ldots\times n_D=\mathbf{n}$ and $\boldsymbol{\mathcal{Y}}$ is a random order-D tensor with dimensional lengths $m_1 \times m_2 \times\ldots\times m_D=\mathbf{m}$, then the cross-covariance tensor is the order-2D tensor,  with dimensional lengths $\mathbf{n}\times\mathbf{m}$.

The cross-covariance tensor of random tensors has the following properties. The proof of the properties is trivial, so we omit it.
\begin{property}
$\left(\boldsymbol{\mathcal{K}}_{\boldsymbol{\mathcal{X}},\boldsymbol{\mathcal{Y}}}\right)_{i_1,\ldots,i_D,i_{D+1},\ldots,i_{2D}}=\left(\boldsymbol{\mathcal{K}}_{\boldsymbol{\mathcal{Y}},\boldsymbol{\mathcal{X}}}\right)_{i_{D+1},\ldots,i_{2D},i_1,\ldots,i_D}$.
\end{property}
\begin{property}
$\boldsymbol{\mathcal{K}}_{\boldsymbol{\mathcal{X}}+\boldsymbol{\mathcal{Y}},\boldsymbol{\mathcal{Z}}}=\boldsymbol{\mathcal{K}}_{\boldsymbol{\mathcal{X}},\boldsymbol{\mathcal{Z}}}+\boldsymbol{\mathcal{K}}_{\boldsymbol{\mathcal{Y}},\boldsymbol{\mathcal{Z}}}$.
\end{property}
\begin{property}
If $\boldsymbol{\mathcal{X}}$ and $\boldsymbol{\mathcal{Y}}$ are independent, then $\boldsymbol{\mathcal{K}}_{\boldsymbol{\mathcal{X}},\boldsymbol{\mathcal{Y}}}=\boldsymbol{\mathcal{O}}$, where $\boldsymbol{\mathcal{O}}$ is the zero tensor.
\end{property}

The covariance tensor of a random tensor $\boldsymbol{\mathcal{X}}$ will be denoted by $\boldsymbol{\mathcal{K}}_{\boldsymbol{\mathcal{X}}\boldsymbol{\mathcal{X}}}$.
\begin{defi}
Let $\boldsymbol{\mathcal{X}}$ is a random order-D tensor with dimensional lengths $n_1 \times n_2 \times\ldots\times n_D=\mathbf{n}$, then the covariance tensor is given by
\[
\boldsymbol{\mathcal{K}}_{\boldsymbol{\mathcal{X}}\boldsymbol{\mathcal{X}}}=\mathbb{E}\left[\left(\boldsymbol{\mathcal{X}}-\mathbb{E}\left[\boldsymbol{\mathcal{X}}\right]\right)\otimes_{outer}\left(\boldsymbol{\mathcal{X}}-\mathbb{E}\left[\boldsymbol{\mathcal{X}}\right]\right)\right],
\]
where $\otimes_{outer}$ is the outer product of tensors. In other words,
\[
\left(\boldsymbol{\mathcal{K}}_{\boldsymbol{\mathcal{X}}\boldsymbol{\mathcal{X}}}\right)_{i_1,\ldots,i_D,i_{D+1},\ldots,i_{2D}}=\operatorname{cov}\left(\boldsymbol{\mathcal{X}}_{i_1,\ldots,i_D},\boldsymbol{\mathcal{X}}_{i_{D+1},\ldots,i_{2D}}\right).
\]
\end{defi}
 Thus, if $\boldsymbol{\mathcal{X}}$ is a random order-D tensor with dimensional lengths $n_1 \times n_2 \times\ldots\times n_D=\mathbf{n}$, then the covariance tensor is the order-2D tensor,  with dimensional lengths $\mathbf{n}\times\mathbf{n}$.

The covariance tensor of random tensors has the following properties. The proof of the properties is trivial, so we omit it.
\begin{property}
The covariance tensor is symmetric, i.e. $\boldsymbol{\mathcal{K}}_{\boldsymbol{\mathcal{X}},\boldsymbol{\mathcal{X}}}^{\mathrm{T}}=\boldsymbol{\mathcal{K}}_{\boldsymbol{\mathcal{X}},\boldsymbol{\mathcal{X}}}$. In other words, 
\[
\left(\boldsymbol{\mathcal{K}}_{\boldsymbol{\mathcal{X}},\boldsymbol{\mathcal{X}}}\right)_{i_1,\ldots,i_D,i_{D+1},\ldots,i_{2D}}=\left(\boldsymbol{\mathcal{K}}_{\boldsymbol{\mathcal{X}},\boldsymbol{\mathcal{X}}}\right)_{i_{D+1},\ldots,i_{2D},i_1,\ldots,i_D}.
\]
\end{property}
\begin{property}
$\boldsymbol{\mathcal{K}}_{\boldsymbol{\mathcal{X}},\boldsymbol{\mathcal{X}}}=\mathbb{E}\left[\boldsymbol{\mathcal{X}}\otimes_{outer}\boldsymbol{\mathcal{X}}\right]-\mathbb{E}\left[\boldsymbol{\mathcal{X}}\right]\otimes_{outer}\mathbb{E}\left[\boldsymbol{\mathcal{X}}\right]$.
\end{property}
\begin{property}
Let $\boldsymbol{\mathcal{X}}$ is a random order-D tensor and $\boldsymbol{\mathcal{Y}}$ is a random order-D tensor with the same dimension as $\boldsymbol{\mathcal{X}}$, then
\[
\boldsymbol{\mathcal{K}}_{\boldsymbol{\mathcal{X}}+\boldsymbol{\mathcal{Y}},\boldsymbol{\mathcal{X}}+\boldsymbol{\mathcal{Y}}}=\boldsymbol{\mathcal{K}}_{\boldsymbol{\mathcal{X}},\boldsymbol{\mathcal{X}}}+\boldsymbol{\mathcal{K}}_{\boldsymbol{\mathcal{X}},\boldsymbol{\mathcal{Y}}}+\boldsymbol{\mathcal{K}}_{\boldsymbol{\mathcal{Y}},\boldsymbol{\mathcal{X}}}+\boldsymbol{\mathcal{K}}_{\boldsymbol{\mathcal{Y}},\boldsymbol{\mathcal{Y}}},
\]
where $\boldsymbol{\mathcal{K}}_{\boldsymbol{\mathcal{X}},\boldsymbol{\mathcal{Y}}}$ and $\boldsymbol{\mathcal{K}}_{\boldsymbol{\mathcal{Y}},\boldsymbol{\mathcal{X}}}$ are the cross-covariance matrices.
\end{property}
\begin{property}
\label{kxxmat}
Let $\boldsymbol{\mathcal{X}}$ is a random tensor, then
\[
\operatorname{mat}\left(\boldsymbol{\mathcal{K}}_{\boldsymbol{\mathcal{X}}\boldsymbol{\mathcal{X}}}\right)=\mathbf{K}_{\boldsymbol{\mathcal{X}}\boldsymbol{\mathcal{X}}},
\]
where $\mathbf{K}_{\boldsymbol{\mathcal{X}\mathcal{X}}}=\mathbb{E}\left[\operatorname{vec}\left(\boldsymbol{\mathcal{X}}-\mathbb{E}\left[\boldsymbol{\mathcal{X}}\right]\right)\operatorname{vec}\left(\boldsymbol{\mathcal{X}}-\mathbb{E}\left[\boldsymbol{\mathcal{X}}\right]\right)^{\mathrm{T}}\right]$ is the covariance matrix of the random tensor $\boldsymbol{\mathcal{X}}$.
\end{property}
\section{Correlation tensor}
In a similar way, we can define the correlation tensors.

The cross-correlation tensor of random tensors $\boldsymbol{\mathcal{X}}$ and $\boldsymbol{\mathcal{Y}}$ will be denoted by $\boldsymbol{\mathcal{R}}_{\boldsymbol{\mathcal{X}}\boldsymbol{\mathcal{Y}}}$.
\begin{defi}
Let $\boldsymbol{\mathcal{X}}$ is a random order-D tensor with dimensional lengths $n_1 \times n_2 \times\ldots\times n_D=\mathbf{n}$ and $\boldsymbol{\mathcal{Y}}$ is a random order-D tensor with dimensional lengths $m_1 \times m_2 \times\ldots\times m_D=\mathbf{m}$, then the cross-correlation tensor is given by
\[
\left(\boldsymbol{\mathcal{R}}_{\boldsymbol{\mathcal{X}}\boldsymbol{\mathcal{Y}}}\right)_{i_1,\ldots,i_D,i_{D+1},\ldots,i_{2D}}=\operatorname{corr}\left(\boldsymbol{\mathcal{X}}_{i_1,\ldots,i_D},\boldsymbol{\mathcal{Y}}_{i_{D+1},\ldots,i_{2D}}\right).
\]
\end{defi}
 Thus, if $\boldsymbol{\mathcal{X}}$ is a random order-D tensor with dimensional lengths $n_1 \times n_2 \times\ldots\times n_D=\mathbf{n}$ and $\boldsymbol{\mathcal{Y}}$ is a random order-D tensor with dimensional lengths $m_1 \times m_2 \times\ldots\times m_D=\mathbf{m}$, then the cross-correlation tensor is the order-2D tensor,  with dimensional lengths $\mathbf{n}\times\mathbf{m}$.

Equivalently, the cross-correlation tensor can be seen as the cross-covariance tensor of the standardized random variables $\boldsymbol{\mathcal{X}}_{i_1,\ldots,i_D}/\sigma(\boldsymbol{\mathcal{X}}_{i_1,\ldots,i_D})$ and  $\boldsymbol{\mathcal{Y}}_{i_1,\ldots,i_D}/\sigma(\boldsymbol{\mathcal{Y}}_{i_{D+1},\ldots,i_{2D}})$.

The correlation tensor of a random tensor $\boldsymbol{\mathcal{X}}$ will be denoted by $\boldsymbol{\mathcal{R}}_{\boldsymbol{\mathcal{X}}\boldsymbol{\mathcal{X}}}$.
\begin{defi}
Let $\boldsymbol{\mathcal{X}}$ is a random order-D tensor with dimensional lengths $n_1 \times n_2 \times\ldots\times n_D=\mathbf{n}$, then the correlation tensor is given by
\[
\left(\boldsymbol{\mathcal{R}}_{\boldsymbol{\mathcal{X}}\boldsymbol{\mathcal{X}}}\right)_{i_1,\ldots,i_D,i_{D+1},\ldots,i_{2D}}=\operatorname{corr}\left(\boldsymbol{\mathcal{X}}_{i_1,\ldots,i_D},\boldsymbol{\mathcal{X}}_{i_{D+1},\ldots,i_{2D}}\right).
\]
\end{defi}
 Thus, if $\boldsymbol{\mathcal{X}}$ is a random order-D tensor with dimensional lengths $n_1 \times n_2 \times\ldots\times n_D=\mathbf{n}$, then the correlation tensor is the order-2D tensor,  with dimensional lengths $\mathbf{n}\times\mathbf{n}$.
\section{Covariance tensor as a parameter of tensor elliptical distributions}
The tensor normal distribution of a  real random order-D tensor $\boldsymbol{\mathcal{X}}$, with dimensional lengths $n_1 \times n_2 \times\ldots\times n_D=\mathbf{n}$, i.e. $\boldsymbol{\mathcal{X}}\in\mathbb{R}^{\mathbf{n}}$, can be written in the following notation:
\[
\mathcal{TN}_{\mathbf{n}}\left(\boldsymbol{\mathcal{M}},\boldsymbol{\mathcal{S}}\right),
\]
where $\boldsymbol{\mathcal{M}}\in\mathbb{R}^{\mathbf{n}}$ is a location tensor and $\boldsymbol{\mathcal{S}}\in\mathbb{R}^{\mathbf{n}\times\mathbf{n}}$ is a scale symmetric tensor.
\begin{defi}
\label{norm}
Let $\boldsymbol{\mathcal{X}}$ is a random order-D tensor, with dimensional lengths $n_1 \times n_2 \times\ldots\times n_D=\mathbf{n}$, then 
\[
\boldsymbol{\mathcal{X}}\sim\mathcal{TN}_{\mathbf{n}}\left(\boldsymbol{\mathcal{M}},\boldsymbol{\mathcal{S}}\right)\Leftrightarrow\operatorname{vec}\left(\boldsymbol{\mathcal{X}}\right)\sim\mathcal{N}_{n^{*}}\left(\operatorname{vec}\left(\boldsymbol{\mathcal{M}}\right),\operatorname{mat}\left(\boldsymbol{\mathcal{S}}\right)\right),
\]
where $n^{*}=\prod_{i=1}^{D}n_i$. In other words, the probability density function of the random tensor is given by
\[
f(\boldsymbol{\mathcal{X}})=\frac{\operatorname{exp}\left(-\frac{1}{2}\operatorname{vec}\left(\boldsymbol{\mathcal{X}}-\boldsymbol{\mathcal{M}}\right)^{\mathrm{T}}\operatorname{mat}\left(\boldsymbol{\mathcal{S}}\right)^{-1}\operatorname{vec}\left(\boldsymbol{\mathcal{X}}-\boldsymbol{\mathcal{M}}\right)\right)}{(2\pi)^{n^{*}/2}\det\left(\operatorname{mat}\left(\boldsymbol{\mathcal{S}}\right)\right)^{1/2}}.
\]
\end{defi}
\begin{theorem}
Let $\boldsymbol{\mathcal{X}}$ is a random order-D tensor, with dimensional lengths $n_1 \times n_2 \times\ldots\times n_D=\mathbf{n}$, and $\boldsymbol{\mathcal{X}}\sim\mathcal{TN}_{\mathbf{n}}\left(\boldsymbol{\mathcal{M}},\boldsymbol{\mathcal{S}}\right)$, then the expected value of the random tensor is equal to $\boldsymbol{\mathcal{M}}$.
\end{theorem}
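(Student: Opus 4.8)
The plan is to reduce everything to the known first-moment property of the multivariate normal distribution via the vectorization map. First I would invoke Definition \ref{norm}: the hypothesis $\boldsymbol{\mathcal{X}}\sim\mathcal{TN}_{\mathbf{n}}\left(\boldsymbol{\mathcal{M}},\boldsymbol{\mathcal{S}}\right)$ is, by definition, equivalent to $\operatorname{vec}\left(\boldsymbol{\mathcal{X}}\right)\sim\mathcal{N}_{n^{*}}\left(\operatorname{vec}\left(\boldsymbol{\mathcal{M}}\right),\operatorname{mat}\left(\boldsymbol{\mathcal{S}}\right)\right)$, with $n^{*}=\prod_{i=1}^{D}n_i$. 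So the problem is transported from the tensor $\boldsymbol{\mathcal{X}}$ to the ordinary random vector $\operatorname{vec}\left(\boldsymbol{\mathcal{X}}\right)$.

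Next I would apply the classical fact (the same property of $\mathcal{N}_{n}\left(\boldsymbol{\mu},\mathbf{\Sigma}\right)$ already cited in the Introduction from \cite{kotz}) that the expected value of a multivariate normal random vector equals its location parameter. This gives $\mathbb{E}\left[\operatorname{vec}\left(\boldsymbol{\mathcal{X}}\right)\right]=\operatorname{vec}\left(\boldsymbol{\mathcal{M}}\right)$.

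Then I would push the identity back through $\operatorname{vec}$. The vectorization map merely rearranges the $n^{*}$ entries of an order-$D$ tensor into a column vector, so it is a linear bijection onto $\mathbb{R}^{n^{*}}$, and since expectation is taken entrywise it commutes with this rearrangement: $\mathbb{E}\left[\operatorname{vec}\left(\boldsymbol{\mathcal{X}}\right)\right]=\operatorname{vec}\left(\mathbb{E}\left[\boldsymbol{\mathcal{X}}\right]\right)$. Combining with the previous step yields $\operatorname{vec}\left(\mathbb{E}\left[\boldsymbol{\mathcal{X}}\right]\right)=\operatorname{vec}\left(\boldsymbol{\mathcal{M}}\right)$, and applying the inverse of $\operatorname{vec}$ (injectivity) gives $\mathbb{E}\left[\boldsymbol{\mathcal{X}}\right]=\boldsymbol{\mathcal{M}}$, as claimed.

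I do not expect a genuine obstacle here; the only points needing a word of justification are that expectation commutes with $\operatorname{vec}$ and that $\operatorname{vec}$ is invertible, both of which are immediate from the entrywise, coordinate-permuting nature of $\operatorname{vec}$. The entire argument is a two-line translation through Definition \ref{norm} followed by the cited scalar-case result, so I would keep the write-up correspondingly short.
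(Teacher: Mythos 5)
Your proposal is correct and follows essentially the same route as the paper: invoke Definition \ref{norm} to pass to $\operatorname{vec}\left(\boldsymbol{\mathcal{X}}\right)\sim\mathcal{N}_{n^{*}}\left(\operatorname{vec}\left(\boldsymbol{\mathcal{M}}\right),\operatorname{mat}\left(\boldsymbol{\mathcal{S}}\right)\right)$, conclude $\mathbb{E}\left[\operatorname{vec}\left(\boldsymbol{\mathcal{X}}\right)\right]=\operatorname{vec}\left(\boldsymbol{\mathcal{M}}\right)$ from the classical multivariate normal property, and pull back through $\operatorname{vec}$. Your explicit remarks that expectation commutes with $\operatorname{vec}$ and that $\operatorname{vec}$ is injective merely spell out the final step the paper leaves implicit.
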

\begin{proof}
By Definition \ref{norm},
\[
\boldsymbol{\mathcal{X}}\sim\mathcal{TN}_{\mathbf{n}}\left(\boldsymbol{\mathcal{M}},\boldsymbol{\mathcal{S}}\right)\Leftrightarrow\operatorname{vec}\left(\boldsymbol{\mathcal{X}}\right)\sim\mathcal{N}_{n^{*}}\left(\operatorname{vec}\left(\boldsymbol{\mathcal{M}}\right),\operatorname{mat}\left(\boldsymbol{\mathcal{S}}\right)\right).
\]
Therefore, $\mathbb{E}\left[\operatorname{vec}\left(\boldsymbol{\mathcal{X}}\right)\right]=\operatorname{vec}\left(\boldsymbol{\mathcal{M}}\right)$. Thus, $\mathbb{E}\left[\boldsymbol{\mathcal{X}}\right]=\boldsymbol{\mathcal{M}}$.
\end{proof}
\begin{theorem}
Let $\boldsymbol{\mathcal{X}}$ is a random order-D tensor, with dimensional lengths $n_1 \times n_2 \times\ldots\times n_D=\mathbf{n}$, and $\boldsymbol{\mathcal{X}}\sim\mathcal{TN}_{\mathbf{n}}\left(\boldsymbol{\mathcal{M}},\boldsymbol{\mathcal{S}}\right)$, then the covariance tensor of the random tensor is equal to $\boldsymbol{\mathcal{S}}$.
\end{theorem}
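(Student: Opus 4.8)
The plan is to reduce the statement to the classical Gaussian case via Definition~\ref{norm} and then transport the conclusion back through the matricization map. First I would invoke Definition~\ref{norm} to obtain $\operatorname{vec}\left(\boldsymbol{\mathcal{X}}\right)\sim\mathcal{N}_{n^{*}}\left(\operatorname{vec}\left(\boldsymbol{\mathcal{M}}\right),\operatorname{mat}\left(\boldsymbol{\mathcal{S}}\right)\right)$, where $n^{*}=\prod_{i=1}^{D}n_i$. By the property of the multivariate normal distribution recalled in the Introduction (for $\boldsymbol{x}\sim\mathcal{N}_{n}\left(\boldsymbol{\mu},\mathbf{\Sigma}\right)$ the covariance matrix $\mathbf{K}_{\boldsymbol{xx}}$ equals $\mathbf{\Sigma}$), the covariance matrix of $\operatorname{vec}\left(\boldsymbol{\mathcal{X}}\right)$ equals $\operatorname{mat}\left(\boldsymbol{\mathcal{S}}\right)$; that is, in the notation of Property~\ref{kxxmat}, $\mathbf{K}_{\boldsymbol{\mathcal{X}}\boldsymbol{\mathcal{X}}}=\operatorname{mat}\left(\boldsymbol{\mathcal{S}}\right)$.

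Next I would apply Property~\ref{kxxmat}, which gives $\operatorname{mat}\left(\boldsymbol{\mathcal{K}}_{\boldsymbol{\mathcal{X}}\boldsymbol{\mathcal{X}}}\right)=\mathbf{K}_{\boldsymbol{\mathcal{X}}\boldsymbol{\mathcal{X}}}$. Chaining this with the previous identity yields $\operatorname{mat}\left(\boldsymbol{\mathcal{K}}_{\boldsymbol{\mathcal{X}}\boldsymbol{\mathcal{X}}}\right)=\operatorname{mat}\left(\boldsymbol{\mathcal{S}}\right)$. Finally, since $\operatorname{mat}$ is a bijection between order-$2D$ tensors of shape $\mathbf{n}\times\mathbf{n}$ and $n^{*}\times n^{*}$ matrices — it merely re-indexes the entries, as in the definition of the matricization — it is in particular injective, so I can conclude $\boldsymbol{\mathcal{K}}_{\boldsymbol{\mathcal{X}}\boldsymbol{\mathcal{X}}}=\boldsymbol{\mathcal{S}}$.

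The only step requiring a little care is the last one: one must note that $\operatorname{mat}$ is injective, so that equality of the matricizations forces equality of the underlying tensors. This is immediate from the definition, which assigns each tensor entry to a distinct position of the matrix, so I expect no genuine obstacle here; the substance of the argument is simply the composition of Definition~\ref{norm}, the classical normal covariance property, and Property~\ref{kxxmat}. One could alternatively avoid even mentioning injectivity of $\operatorname{mat}$ by instead quoting, at the vectorized level, that $\operatorname{cov}\left(\operatorname{vec}(\boldsymbol{\mathcal{X}})_a,\operatorname{vec}(\boldsymbol{\mathcal{X}})_b\right)=\operatorname{mat}(\boldsymbol{\mathcal{S}})_{a,b}$ for all index pairs $(a,b)$ and then translating each scalar entry back to the tensor indices via the definition of $\operatorname{vec}$ and $\operatorname{mat}$, but the bijectivity route is shorter.
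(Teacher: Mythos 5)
Your proposal is correct and follows essentially the same route as the paper's proof: reduce via Definition~\ref{norm} to the vectorized normal, use the classical fact that the covariance of $\operatorname{vec}(\boldsymbol{\mathcal{X}})$ is $\operatorname{mat}(\boldsymbol{\mathcal{S}})$, and then apply Property~\ref{kxxmat} to get $\operatorname{mat}\left(\boldsymbol{\mathcal{K}}_{\boldsymbol{\mathcal{X}}\boldsymbol{\mathcal{X}}}\right)=\operatorname{mat}\left(\boldsymbol{\mathcal{S}}\right)$ and hence $\boldsymbol{\mathcal{K}}_{\boldsymbol{\mathcal{X}}\boldsymbol{\mathcal{X}}}=\boldsymbol{\mathcal{S}}$. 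The only difference is that you make explicit the injectivity of $\operatorname{mat}$ in the final step, which the paper leaves implicit.
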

\begin{proof}
By Definition \ref{norm},
\[
\boldsymbol{\mathcal{X}}\sim\mathcal{TN}_{\mathbf{n}}\left(\boldsymbol{\mathcal{M}},\boldsymbol{\mathcal{S}}\right)\Leftrightarrow\operatorname{vec}\left(\boldsymbol{\mathcal{X}}\right)\sim\mathcal{N}_{n^{*}}\left(\operatorname{vec}\left(\boldsymbol{\mathcal{M}}\right),\operatorname{mat}\left(\boldsymbol{\mathcal{S}}\right)\right).
\]
Therefore, $\mathbf{K}_{\operatorname{vec}\left(\boldsymbol{\mathcal{X}}\right),\operatorname{vec}\left(\boldsymbol{\mathcal{X}}\right)}=\mathbf{K}_{\boldsymbol{\mathcal{X}\mathcal{X}}}=\operatorname{mat}\left(\boldsymbol{\mathcal{S}}\right)$.

By Property \ref{kxxmat},
\[
\operatorname{mat}\left(\boldsymbol{\mathcal{K}}_{\boldsymbol{\mathcal{X}}\boldsymbol{\mathcal{X}}}\right)=\operatorname{mat}\left(\boldsymbol{\mathcal{S}}\right).
\]
Thus, $\boldsymbol{\mathcal{K}}_{\boldsymbol{\mathcal{X}}\boldsymbol{\mathcal{X}}}=\boldsymbol{\mathcal{S}}$.
\end{proof}
\begin{theorem}
Let $\boldsymbol{\mathcal{X}}=\left(\boldsymbol{\mathcal{X}}_{i_1,\ldots,i_D}\right)$ is a random order-D tensor, with dimensional lengths $n_1 \times n_2 \times\ldots\times n_D=\mathbf{n}$, and $\boldsymbol{\mathcal{X}}\sim\mathcal{TN}_{\mathbf{n}}\left(\boldsymbol{\mathcal{M}},\boldsymbol{\mathcal{S}}\right)$, then the probability density function of the random tensor is given by
\[
f\left(\boldsymbol{\mathcal{X}}\right)=\frac{\operatorname{exp}\left(-\frac{1}{2}\left(\boldsymbol{\mathcal{X}}-\boldsymbol{\mathcal{M}}\right):\boldsymbol{\mathcal{S}}^{-1}:\left(\boldsymbol{\mathcal{X}}-\boldsymbol{\mathcal{M}}\right)\right)}{(2\pi)^{n^{*}/2}\det\left(\boldsymbol{\mathcal{S}}\right)^{1/2}},
\]
where $n^{*}=\prod_{i=1}^{D}n_i$ and $:$ is the double dot product of tensors, i.e
\[
\left(\boldsymbol{\mathcal{X}}-\boldsymbol{\mathcal{M}}\right):\boldsymbol{\mathcal{S}}^{-1}:\left(\boldsymbol{\mathcal{X}}-\boldsymbol{\mathcal{M}}\right)=\left(\boldsymbol{\mathcal{X}}-\boldsymbol{\mathcal{M}}\right)_{i_1,\ldots,i_D}\left(\boldsymbol{\mathcal{S}}^{-1}\right)^{i_1,\ldots,i_D}_{i_{D+1},\ldots,i_{2D}}\left(\boldsymbol{\mathcal{X}}-\boldsymbol{\mathcal{M}}\right)^{i_{D+1},\ldots,i_{2D}}.
\]
\end{theorem}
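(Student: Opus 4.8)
The plan is to start from the density given in Definition \ref{norm} and rewrite its two ingredients --- the normalizing constant and the quadratic form in the exponent --- in intrinsic tensor notation, so that the result matches the claimed formula.

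First, the denominator. The constant $(2\pi)^{n^{*}/2}$ is already in the desired form, since $\operatorname{mat}\left(\boldsymbol{\mathcal{S}}\right)$ is an $n^{*}\times n^{*}$ matrix with $n^{*}=\prod_{i=1}^{D}n_i$. By Definition \ref{det}, $\det\left(\boldsymbol{\mathcal{S}}\right)=\det\left(\operatorname{mat}\left(\boldsymbol{\mathcal{S}}\right)\right)$, hence $\det\left(\operatorname{mat}\left(\boldsymbol{\mathcal{S}}\right)\right)^{1/2}=\det\left(\boldsymbol{\mathcal{S}}\right)^{1/2}$ immediately. Second, the exponent. By Property \ref{mat-1}, $\operatorname{mat}\left(\boldsymbol{\mathcal{S}}\right)^{-1}=\operatorname{mat}\left(\boldsymbol{\mathcal{S}}^{-1}\right)$, so it suffices to establish that for any order-$D$ tensor $\boldsymbol{\mathcal{Z}}$ with dimensional lengths $\mathbf{n}$ (we take $\boldsymbol{\mathcal{Z}}=\boldsymbol{\mathcal{X}}-\boldsymbol{\mathcal{M}}$) and any order-$2D$ tensor $\boldsymbol{\mathcal{A}}$ with dimensional lengths $\mathbf{n}\times\mathbf{n}$ (we take $\boldsymbol{\mathcal{A}}=\boldsymbol{\mathcal{S}}^{-1}$) one has
\[
\operatorname{vec}\left(\boldsymbol{\mathcal{Z}}\right)^{\mathrm{T}}\operatorname{mat}\left(\boldsymbol{\mathcal{A}}\right)\operatorname{vec}\left(\boldsymbol{\mathcal{Z}}\right)=\boldsymbol{\mathcal{Z}}:\boldsymbol{\mathcal{A}}:\boldsymbol{\mathcal{Z}}.
\]
To see this I would write out the left-hand side in coordinates: the entry of $\operatorname{vec}\left(\boldsymbol{\mathcal{Z}}\right)$ in position $p$ is $\boldsymbol{\mathcal{Z}}_{i_1,\ldots,i_D}$, where $p$ corresponds to the multi-index $\left(i_1,\ldots,i_D\right)$ under the ordering of multi-indices used to index the rows and columns of $\operatorname{mat}\left(\boldsymbol{\mathcal{A}}\right)$, and similarly the $(p,q)$-entry of $\operatorname{mat}\left(\boldsymbol{\mathcal{A}}\right)$ is $\boldsymbol{\mathcal{A}}_{i_1,\ldots,i_D,j_1,\ldots,j_D}$ with $q\leftrightarrow\left(j_1,\ldots,j_D\right)$. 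Expanding the matrix product and summing over $p$ and $q$ is then the same as summing over all $\left(i_1,\ldots,i_D\right)$ and $\left(j_1,\ldots,j_D\right)$, which is precisely the Einstein-summed expression defining $\boldsymbol{\mathcal{Z}}:\boldsymbol{\mathcal{A}}:\boldsymbol{\mathcal{Z}}$. Substituting $\boldsymbol{\mathcal{A}}=\boldsymbol{\mathcal{S}}^{-1}$, $\boldsymbol{\mathcal{Z}}=\boldsymbol{\mathcal{X}}-\boldsymbol{\mathcal{M}}$, and combining with the denominator computation, yields the stated density.

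The only real obstacle is bookkeeping: one must fix once and for all the bijection between linear indices $\{1,\ldots,n^{*}\}$ and multi-indices $\left(i_1,\ldots,i_D\right)$, and verify that $\operatorname{vec}$ and $\operatorname{mat}$ employ the very same ordering, so that the single contracted index in the matrix product lines up with the two contracted multi-indices in the tensor double dot product. Once that compatibility is recorded, the displayed identity is a one-line relabelling of summation variables, and the theorem follows.
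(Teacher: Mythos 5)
Your proposal is correct and follows essentially the same route as the paper's own proof: start from the density in Definition \ref{norm}, convert the determinant via Definition \ref{det}, convert the inverse via Property \ref{mat-1}, and identify the quadratic form $\operatorname{vec}(\cdot)^{\mathrm{T}}\operatorname{mat}(\cdot)\operatorname{vec}(\cdot)$ with the double dot product. The only difference is that you spell out the index-bookkeeping behind the last identification, which the paper dismisses with ``Obviously''; that is a welcome (and needed, since the paper never fixes the $\operatorname{vec}$/$\operatorname{mat}$ ordering explicitly) elaboration rather than a different argument.
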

\begin{proof}
By Definition \ref{norm}, $\boldsymbol{\mathcal{X}}\sim\mathcal{TN}_{\mathbf{n}}\left(\boldsymbol{\mathcal{M}},\boldsymbol{\mathcal{S}}\right)$ if and only if the probability density function of the tensor is given by
\[
f(\boldsymbol{\mathcal{X}})=\frac{\operatorname{exp}\left(-\frac{1}{2}\operatorname{vec}\left(\boldsymbol{\mathcal{X}}-\boldsymbol{\mathcal{M}}\right)^{\mathrm{T}}\operatorname{mat}\left(\boldsymbol{\mathcal{S}}\right)^{-1}\operatorname{vec}\left(\boldsymbol{\mathcal{X}}-\boldsymbol{\mathcal{M}}\right)\right)}{(2\pi)^{n^{*}/2}\det\left(\operatorname{mat}\left(\boldsymbol{\mathcal{S}}\right)\right)^{1/2}}.
\]
By Definition \ref{det}, $\det\left(\boldsymbol{\mathcal{X}}\right)=\det\left(\operatorname{mat}\left(\boldsymbol{\mathcal{X}}\right)\right)$. By Property \ref{mat-1}, $\operatorname{mat}\left(\boldsymbol{\mathcal{S}}^{-1}\right)=\operatorname{mat}\left(\boldsymbol{\mathcal{S}}\right)^{-1}$.

Therefore,
\[
f(\boldsymbol{\mathcal{X}})=\frac{\operatorname{exp}\left(-\frac{1}{2}\operatorname{vec}\left(\boldsymbol{\mathcal{X}}-\boldsymbol{\mathcal{M}}\right)^{\mathrm{T}}\operatorname{mat}\left(\boldsymbol{\mathcal{S}}^{-1}\right)\operatorname{vec}\left(\boldsymbol{\mathcal{X}}-\boldsymbol{\mathcal{M}}\right)\right)}{(2\pi)^{n^{*}/2}\det\left(\boldsymbol{\mathcal{S}}\right)^{1/2}}.
\]
Obviously, 
\[
\operatorname{vec}\left(\boldsymbol{\mathcal{X}}-\boldsymbol{\mathcal{M}}\right)^{\mathrm{T}}\operatorname{mat}\left(\boldsymbol{\mathcal{S}}^{-1}\right)\operatorname{vec}\left(\boldsymbol{\mathcal{X}}-\boldsymbol{\mathcal{M}}\right)=\left(\boldsymbol{\mathcal{X}}-\boldsymbol{\mathcal{M}}\right)_{i_1,\ldots,i_D}\left(\boldsymbol{\mathcal{S}}^{-1}\right)^{i_1,\ldots,i_D}_{i_{D+1},\ldots,i_{2D}}\left(\boldsymbol{\mathcal{X}}-\boldsymbol{\mathcal{M}}\right)^{i_{D+1},\ldots,i_{2D}}.
\]
Thus, 
\[
f\left(\boldsymbol{\mathcal{X}}\right)=\frac{\operatorname{exp}\left(-\frac{1}{2}\left(\boldsymbol{\mathcal{X}}-\boldsymbol{\mathcal{M}}\right):\boldsymbol{\mathcal{S}}^{-1}:\left(\boldsymbol{\mathcal{X}}-\boldsymbol{\mathcal{M}}\right)\right)}{(2\pi)^{n^{*}/2}\det\left(\boldsymbol{\mathcal{S}}\right)^{1/2}}.
\]
\end{proof}
Thus, if $\boldsymbol{\mathcal{X}}$ is normally distributed, or more generally elliptically distributed, then its probability density function can be expressed in terms of the covariance tensor.
\begin{defi}
If $\boldsymbol{\mathcal{X}}$ is a random order-D tensor, with dimensional lengths $n_1 \times n_2 \times\ldots\times n_D=\mathbf{n}$, and the random tensor is elliptically distributed, then the probability density function of the random tensor is given by
\[
f\left(\boldsymbol{\mathcal{X}}\right)=c_{n^{*}}g\left(\left(\boldsymbol{\mathcal{X}}-\boldsymbol{\mathcal{M}}\right):\boldsymbol{\mathcal{S}}^{-1}:\left(\boldsymbol{\mathcal{X}}-\boldsymbol{\mathcal{M}}\right)\right),
\]
where $c_{n^{*}}$ is the normalizing constant, $\boldsymbol{\mathcal{M}}$ is a location tensor and $\boldsymbol{\mathcal{S}}$ is a symmetric scale tensor which is proportional to the covariance tensor.
\end{defi}
\section{Relationship to Arashi's representation of tensor distributions}
\begin{theorem}
Let $\boldsymbol{\mathcal{X}}$ is a random order-D tensor, with dimensional lengths $n_1 \times n_2 \times\ldots\times n_D=\mathbf{n}$, and $\operatorname{mat}\left(\boldsymbol{\mathcal{S}}\right)=\bigotimes_{i=1}^D\mathbf{\Sigma}_{i}$, then
\[
\boldsymbol{\mathcal{X}}\sim\mathcal{TN}_{\mathbf{n}}\left(\boldsymbol{\mathcal{M}},\boldsymbol{\mathcal{S}}\right)\Leftrightarrow\boldsymbol{\mathcal{X}}\sim\mathcal{TN}_{\mathbf{n}}\left(\boldsymbol{\mathcal{M}},\mathbf{\Sigma}_1,\ldots,\mathbf{\Sigma}_D\right),
\]
where $\otimes$ is the Kronecker product of matrices.
\end{theorem}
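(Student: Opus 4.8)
The plan is to reduce both sides of the asserted equivalence to statements about the Gaussian law of the vectorization $\operatorname{vec}(\boldsymbol{\mathcal{X}})$ and then to observe that, under the hypothesis $\operatorname{mat}(\boldsymbol{\mathcal{S}})=\bigotimes_{i=1}^D\mathbf{\Sigma}_i$, these two laws are literally the same. First I would apply Definition \ref{norm} to the left-hand side: $\boldsymbol{\mathcal{X}}\sim\mathcal{TN}_{\mathbf{n}}(\boldsymbol{\mathcal{M}},\boldsymbol{\mathcal{S}})$ holds if and only if $\operatorname{vec}(\boldsymbol{\mathcal{X}})\sim\mathcal{N}_{n^*}(\operatorname{vec}(\boldsymbol{\mathcal{M}}),\operatorname{mat}(\boldsymbol{\mathcal{S}}))$, with the explicit density recorded there. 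Substituting $\operatorname{mat}(\boldsymbol{\mathcal{S}})=\bigotimes_{i=1}^D\mathbf{\Sigma}_i$ turns this into $\operatorname{vec}(\boldsymbol{\mathcal{X}})\sim\mathcal{N}_{n^*}\bigl(\operatorname{vec}(\boldsymbol{\mathcal{M}}),\bigotimes_{i=1}^D\mathbf{\Sigma}_i\bigr)$.

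Next I would rewrite this Gaussian density in the separable form that defines Arashi's tensor normal law. This uses two standard facts about the Kronecker product: $\bigl(\bigotimes_{i=1}^D\mathbf{\Sigma}_i\bigr)^{-1}=\bigotimes_{i=1}^D\mathbf{\Sigma}_i^{-1}$, and $\det\bigl(\bigotimes_{i=1}^D\mathbf{\Sigma}_i\bigr)=\prod_{i=1}^D\det(\mathbf{\Sigma}_i)^{n^*/n_i}$, the latter obtained by iterating $\det(\mathbf{A}\otimes\mathbf{B})=\det(\mathbf{A})^{q}\det(\mathbf{B})^{p}$ for $\mathbf{A}$ of size $p\times p$ and $\mathbf{B}$ of size $q\times q$. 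The normalizing constant then becomes $(2\pi)^{n^*/2}\prod_{i=1}^D\det(\mathbf{\Sigma}_i)^{n^*/(2n_i)}$ and the quadratic form in the exponent acquires the multilinear Kronecker structure; this is exactly the density — equivalently, the characterization via $\operatorname{vec}(\boldsymbol{\mathcal{X}})$ — of $\mathcal{TN}_{\mathbf{n}}(\boldsymbol{\mathcal{M}},\mathbf{\Sigma}_1,\ldots,\mathbf{\Sigma}_D)$. Reading the resulting chain of equivalences in both directions yields the stated biconditional; note that $\mathbf{\Sigma}_i$ is invertible whenever $\operatorname{mat}(\boldsymbol{\mathcal{S}})$ is, so every expression is well defined.

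The step I expect to require the most care is the bookkeeping of conventions rather than any substantive mathematics: one must check that the ordering of the factors in $\bigotimes_{i=1}^D\mathbf{\Sigma}_i$ agrees with the mode ordering fixed by the matricization convention of Section 2 (and the associated vectorization), and that Arashi's representation $\mathcal{TN}_{\mathbf{n}}(\boldsymbol{\mathcal{M}},\mathbf{\Sigma}_1,\ldots,\mathbf{\Sigma}_D)$ uses the matching ordering for its scale matrices; one must also verify that the exponents $n^*/n_i$ come out correctly from the repeated application of the two-factor determinant rule. Once the conventions are aligned, the proof is a direct substitution built on Definition \ref{norm}, Definition \ref{det}, and Property \ref{mat-1}, with no probabilistic input beyond uniqueness of the multivariate normal density.
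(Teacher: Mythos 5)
Your proposal is correct and follows essentially the same route as the paper: both reduce the two sides to statements about the law of $\operatorname{vec}\left(\boldsymbol{\mathcal{X}}\right)$ via Definition \ref{norm} and Arashi's definition, and then observe that under $\operatorname{mat}\left(\boldsymbol{\mathcal{S}}\right)=\bigotimes_{i=1}^D\mathbf{\Sigma}_{i}$ the two vectorized Gaussian laws coincide. The only difference is that you additionally unpack Arashi's law into its explicit density using the Kronecker inverse and determinant identities (your formulas are correct), whereas the paper avoids this by citing Arashi's definition directly as the vec-level characterization $\operatorname{vec}\left(\boldsymbol{\mathcal{X}}\right)\sim\mathcal{N}_{n^{*}}\left(\operatorname{vec}\left(\boldsymbol{\mathcal{M}}\right),\bigotimes_{i=1}^D\mathbf{\Sigma}_{i}\right)$, making that computation unnecessary.
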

\begin{proof}
By Definition \ref{norm},
\[
\boldsymbol{\mathcal{X}}\sim\mathcal{TN}_{\mathbf{n}}\left(\boldsymbol{\mathcal{M}},\boldsymbol{\mathcal{S}}\right)\Leftrightarrow\operatorname{vec}\left(\boldsymbol{\mathcal{X}}\right)\sim\mathcal{N}_{n^{*}}\left(\operatorname{vec}\left(\boldsymbol{\mathcal{M}}\right),\operatorname{mat}\left(\boldsymbol{\mathcal{S}}\right)\right).
\]
By Arashi's definition \cite{ara},
\[
\boldsymbol{\mathcal{X}}\sim\mathcal{TN}_{\mathbf{n}}\left(\boldsymbol{\mathcal{M}},\mathbf{\Sigma}_1,\ldots,\mathbf{\Sigma}_D\right)\Leftrightarrow\operatorname{vec}\left(\boldsymbol{\mathcal{X}}\right)\sim\mathcal{N}_{n^{*}}\left(\operatorname{vec}\left(\boldsymbol{\mathcal{M}}\right),\bigotimes_{i=1}^D\mathbf{\Sigma}_{i}\right).
\]
Therefore,
\[
\operatorname{vec}\left(\boldsymbol{\mathcal{X}}\right)\sim\mathcal{N}_{n^{*}}\left(\operatorname{vec}\left(\boldsymbol{\mathcal{M}}\right),\operatorname{mat}\left(\boldsymbol{\mathcal{S}}\right)\right)\Leftrightarrow\operatorname{vec}\left(\boldsymbol{\mathcal{X}}\right)\sim\mathcal{N}_{n^{*}}\left(\operatorname{vec}\left(\boldsymbol{\mathcal{M}}\right),\bigotimes_{i=1}^D\mathbf{\Sigma}_{i}\right).
\]
Thus,
\[
\boldsymbol{\mathcal{X}}\sim\mathcal{TN}_{\mathbf{n}}\left(\boldsymbol{\mathcal{M}},\boldsymbol{\mathcal{S}}\right)\Leftrightarrow\boldsymbol{\mathcal{X}}\sim\mathcal{TN}_{\mathbf{n}}\left(\boldsymbol{\mathcal{M}},\mathbf{\Sigma}_1,\ldots,\mathbf{\Sigma}_D\right).
\]
\end{proof}
In a similar way, we can prove the equivalence of the tensor elliptical distribution representations.
\section{Conclusions}
In this article, we define a matrix representation of a tensor, tensor matricization. After that, we presented some properties of the matricization. We also defined the determinant of tensor and proved properties of the determinant. The determinant of a tensor has the same properties as the determinant of a matrix.

In this article, we defined the covariance tensor. As we wrote earlier, if a random tensor is order-D tensor, then the covariance tensor of the random tensor is order-2D tensor. Indeed the covariance tensor of a scalar, i.e order-0 tensor, is scalar, i.e order-0 tensor. The covariance tensor of a vector, i.e order-1 tensor, is matrix, i.e order-2 tensor.  The covariance tensor has the same properties as the covariance matrix.

Based on all of the above, we can argue that the covariance tensor is a generalization of the variance for random tensors.

In a similar way, we have defined the correlation tensor. As we wrote earlier, if a random tensor is order-D tensor, then the correlation tensor of the random tensor is order-2D tensor. Indeed the correlation tensor of a scalar, i.e order-0 tensor, is scalar, i.e order-0 tensor. The correlation tensor of a vector, i.e order-1 tensor, is matrix, i.e order-2 tensor.

Based on all of the above, we can argue that the correlation tensor is a generalization of the correlation for random tensors.

The covariance and correlation tensors can be used to characterize random tensors that have specific distributions. These tensors can be applied to analyze the tensor distributions described in \cite{ara}, \cite{gal} and \cite{yur}.

In this article, we also defined the elliptical distributions. We proved the equivalence of the tensor elliptical distribution representations.
Also, from the equivalence of representations it follows that our definition of the tensor determinant is correct.


\begin{thebibliography}{100}
\bibitem{ara} Arashi, M. "Some theoretical results on tensor elliptical distribution." \emph{arXiv preprint arXiv:1709.00801} (2017).
\bibitem{gal} Gallaugher, Michael P.B., Tait Peter A. and Paul D. McNicholas. "Four skewed tensor distributions." \emph{arXiv preprint arXiv:2106.08984} (2021).
\bibitem{its} Itskov, Mikhail. \emph{Tensor algebra and tensor analysis for engineers.} Springer-Verlag Berlin Heidelberg, 2007.
\bibitem{kotz} Kotz, Samuel, Narayanaswamy Balakrishnan, and Norman L. Johnson. \emph{Continuous multivariate distributions, Volume 1: Models and applications.} John Wiley \& Sons.
\bibitem{yur} Yurchenko, Yurii. "Matrix variate and tensor variate Laplace distributions." \emph{arXiv preprint arXiv:2104.05669} (2021).
\end{thebibliography}
\end{document}